\def\@tocline#1#2#3#4#5#6#7{\relax
  \ifnum #1>\c@tocdepth %
  \else
    \par \addpenalty\@secpenalty\addvspace{#2}%
    \begingroup \hyphenpenalty\@M
    \@ifempty{#4}{%
      \@tempdima\csname r@tocindent\number#1\endcsname\relax
    }{%
      \@tempdima#4\relax
    }%
    \parindent\z@ \leftskip#3\relax \advance\leftskip\@tempdima\relax
    \rightskip\@pnumwidth plus4em \parfillskip-\@pnumwidth
    #5\leavevmode\hskip-\@tempdima
      \ifcase #1
       \or\or \hskip 1em \or \hskip 2em \else \hskip 3em \fi%
      #6\nobreak\relax
    \hfill\hbox to\@pnumwidth{\@tocpagenum{#7}}\par%
    \nobreak
    \endgroup
  \fi}
\theoremstyle{plain}
\newtheorem{theorem}{Theorem}[section]
\newtheorem{prop}[theorem]{Proposition}
\newtheorem{corollary}[theorem]{Corollary}
\newtheorem{question}[theorem]{Question}
\theoremstyle{definition}
\newtheorem{definition}[theorem]{Definition}
\declaretheorem[sibling=theorem,name=Example,qed={$\diamondsuit$}]{example}
\declaretheorem[sibling=theorem,name=Remark,qed={$\diamondsuit$}]{remark}
\numberwithin{theorem}{section}
\numberwithin{equation}{section}
\DeclareMathOperator{\rk}{rk}
\def\CC{\mathbb{C}}
\def\QQ{\mathbb{Q}}
\def\RR{\mathbb{R}}
\def\ZZ{\mathbb{Z}}
\def\NN{\mathbb{N}}
\def\Z+{\mathbb{Z}_{\geq 0}}
\def\R+{\mathbb{R}_{\geq 0}}
\renewcommand*\env@matrix[1][\arraystretch]{%
  \edef\arraystretch{#1}%
  \hskip -\arraycolsep
  \let\@ifnextchar\new@ifnextchar
  \array{*\c@MaxMatrixCols c}}
\newcommand\restr[2]{{%
		\left.\kern-\nulldelimiterspace %
		#1 %
		\vphantom{\big|} %
		\right|_{#2} %
}}
\subjclass[2020]{11B30}
\keywords{Quasihomomorphisms, Hamming distance, Linear approximation}
\title{Quasihomomorphisms from the integers into Hamming metrics}
\author[Draisma]{Jan Draisma}
\address{Mathematical Institute, University of Bern, Sidlerstrasse 5, 3012 Bern, Switzerland; and Department of Mathematics and Computer Science, Eindhoven University of Technology, P.O. Box 513, 5600MB, Eindhoven, the Netherlands}
\email{jan.draisma@unibe.ch}
\author[Eggermont]{Rob H. Eggermont}
\address{Department of Mathematics and Computer Science, Eindhoven University of Technology, P.O. Box 513, 5600MB, Eindhoven, the Netherlands}
\email{r.h.eggermont@tue.nl}
\author[Seynnaeve]{Tim Seynnaeve}
\address{Mathematical Institute, University of Bern, Alpeneggstrasse 22, 3012 Bern, Switzerland}
\email{tim.seynnaeve@unibe.ch}
\author[Tairi]{Nafie Tairi}
\address{Mathematical Institute, University of Bern, Alpeneggstrasse 22, 3012 Bern, Switzerland}
\email{nafie.tairi@unibe.ch}
\author[Ventura]{Emanuele Ventura}
\address{
Politecnico di Torino, 
Dipartimento di Scienze Matematiche ``G.L. Lagrange'', Corso Duca degli Abruzzi 24
10129 Torino, Italy}
\email{emanuele.ventura@polito.it, emanueleventura.sw@gmail.com}
\thanks{JD,TS,NT,EV were partially or fully supported by Vici grant 639.033.514 from the Netherlands Organisation for Scientific Research (NWO) and Swiss National Science Foundation (SNSF) project grant 200021\_191981. RE was supported by NWO Veni grant 016.Veni.192.113.}
\begin{document}
\maketitle

\begin{abstract}
A function $f: \ZZ \to \QQ^n$ is a {\it $c$-quasihomomorphism} if the
Hamming distance between $f(x+y)$ and $f(x)+f(y)$ is at most $c$ for all
$x,y \in \ZZ$.  We show that any $c$-quasihomomorphism has distance at
most some constant $C(c)$ to an actual group homomorphism; here $C(c)$
depends only on $c$ and not on $n$ or $f$. This gives a positive answer
to a special case of a question posed by Kazhdan and Ziegler.
\end{abstract}

\section{Introduction}

A $c$-quasihomomorphism from a group $G$ to a group $H$ with
a left-invariant metric $d$ is a map $f:G \to H$ such that
$d(f(xy),f(x)f(y)) \leq c$ for all $x,y$ in $G$. A central question
in geometric group theory, first raised by Ulam in
\cite[Chapter 6]{Ulam1960}, is whether there exists an actual homomorphism
$f':G \to H$ such that $d(f(x),f'(x))$ is at most some constant $C$
for all $x$.  Different versions of this question are of interest:
for instance, $C$ may be allowed to depend on $c,G,(H,d)$ but not on
$f$; or $G,(H,d)$ may be restricted to certain classes and 
or $C$ is only allowed to depend on $c$.

A well-known example where the answer to this question is negative is the
case where $G=H=\ZZ$ with the standard metric. Here, quasihomomorphisms
modulo bounded maps are a model of the real numbers \cite{ACampo2021},
and the answer is yes only for those quasihomomorphisms that correspond to 
integers. 

Much literature in this area focusses on {\em quasimorphisms}, which
are quasihomomorphisms into the real numbers $\RR$ with the standard
metric; we refer to \cite{Kotschick2004} for a brief
introduction. In another branch of the research on
quasihomomorphisms $H$ is assumed nonabelian, and 
one of the first positive results on the central question
above is Kazhdan's theorem on $\varepsilon$-representations of amenable
groups \cite{Kazhdan82}. For more recent results on quasihomomorphisms
into nonabelian groups we refer to \cite{FujiwaraKapovich2016} and the
references there.

The following instance of the central question was formulated
by Kazhdan and Ziegler in their work on approximate cohomology
\cite{KazhdanZiegler2018}. 

\begin{question}\label{prob}
Let $c \in \NN$. Does there exist a constant $C = C(c)$ such that the
following holds: For all $n \in \NN$ and all functions $f: \ZZ \longrightarrow \CC^{n\times n}$ such that 
\[
    \forall x,y \in \ZZ: \qquad \mathrm{rk}(f(x+y) - f(x) - f(y))\leq c,
\] 
there exists a matrix $g$ such that 
\[
\forall x \in \ZZ: \qquad \mathrm{rk}(f(x) - x\cdot g)\leq C(c)?
\]
\end{question}

Here, $G$ equals $\ZZ$ and $H$ equals $\CC^{n \times n}$, both with
addition, and the metric on $H$ is defined by $d(A,B):=\rk(A-B)$.
Our main result is an affirmative answer to this question in the special
case where all matrices $f(x)$ are assumed to be {\em diagonal}.

\begin{definition}
Let $(Q,+)$ be an abelian group. For an element $v \in Q^n$, the \emph{Hamming weight} $w_H(v)$ is the number of nonzero entries of $v$.
For a pair of elements $u,v \in Q^n$, their \emph{Hamming distance} is
$w_H(v-u)$. This metric is clearly left-invariant. 
\end{definition}

\begin{definition}\label{def:quasimor}
Let $A$ be another abelian group. A function $f: A \to Q^n$ is called a {\it $c$-quasihomomorphism} if 
\[
    \forall x,y \in A: \qquad w_H(f(x+y)-f(x)-f(y)) \leq c. \qedhere
\]
\end{definition}

\begin{remark}
The map $\operatorname{diag}:\CC^n \to \CC^{n \times n}$ is an isometric embedding from $\CC^n$ with the Hamming metric to $\CC^{n \times n}$ with the rank metric. This connects Definition~\ref{def:quasimor} to Question~\ref{prob}. 
\end{remark}

\begin{definition}\label{def:linapprox}
Let $C\in \NN$ and let $f: A\to Q^n$ be a $c$-quasihomomorphism. A
group homomorphism $h: A\to Q^n$ is a {\it $C$-approximation of $f$} if the Hamming distance between $f$ and $h$ satisfies 
\[ 
    \forall x \in A: \qquad w_H(f(x)-h(x)) \leq C. \qedhere
\]
\end{definition}

We are ready to state our main result.  

\begin{theorem}[Main Theorem]\label{maintheorem}
Let $c\in \NN$. Then there exists a constant $C = C(c) \in \NN$ such
that for all $n \in \NN$ and $c$-quasihomorphisms $f: \ZZ \to \QQ^n$, we have: 
\[
    \forall x \in \ZZ: \qquad w_H(f(x)-x\cdot f(1)) \leq C.
\]
Moreover, we can take $C=28c$. 
\end{theorem}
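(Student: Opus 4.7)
The plan is to prove the stronger statement that $x \mapsto xf(1)$ itself is a good approximation of $f$. Equivalently, setting $g(x) := f(x) - xf(1)$, we obtain a $c$-quasihomomorphism with the same error vectors $e_{x,y} := f(x+y) - f(x) - f(y) = g(x+y) - g(x) - g(y)$ (since $x \mapsto xf(1)$ is a genuine homomorphism), together with the normalisation $g(1) = 0$. The task reduces to showing $w_H(g(x)) \leq 28c$ for every $x \in \ZZ$.

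First I would collect the easy structural consequences. From $x = y = 0$ one gets $g(0) = -e_{0,0}$, so $w_H(g(0)) \leq c$. From $y = 1$, the identity $g(x+1) = g(x) + e_{x,1}$ says that the Hamming support of $g$ can change by at most $c$ coordinates when $x$ is incremented. Iterating the quasihomomorphism relation gives $g(kx) = kg(x) + E_k$ with $w_H(E_k) \leq (k-1)c$; since $\QQ$ has characteristic zero, $w_H(g(kx))$ and $w_H(g(x))$ differ by at most $(k-1)c$.

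The heart of the argument is to show $|I| \leq 28c$, where $I := \mathrm{supp}(g(x_0))$ for an arbitrary $x_0$. The central identity is
\[
g(x_0) = g(x_0 + y) - g(y) - e_{x_0, y},
\]
valid for every $y \in \ZZ$, which forces $I \subseteq \mathrm{supp}(g(x_0+y)) \cup \mathrm{supp}(g(y)) \cup \mathrm{supp}(e_{x_0, y})$. For each $i \in I$ and each $y$, at least one of $g_i(y) \neq 0$, $g_i(x_0+y) \neq 0$, or $e_{x_0,y,i} \neq 0$ must hold; the last event covers at most $c$ coordinates per $y$. Averaging this pointwise constraint over a long window $y \in [1, N]$ yields an inequality of the form
\[
N(|I| - c) \leq 2 \sum_{z=1}^{N + x_0} |I \cap \mathrm{supp}(g(z))|,
\]
which, if $|I|$ is large, forces $|I \cap \mathrm{supp}(g(z))|$ to be large on average.

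The main obstacle is turning this density statement into a concrete upper bound on $|I|$: the naive estimate $|I \cap \mathrm{supp}(g(z))| \leq |I|$ is vacuous. To do better I would classify each coordinate $i$ according to an asymptotic ``slope'' $v_i \in \QQ$ — for instance $v_i := \lim_N g_i(N)/N$ if it exists — and exploit the quasihomomorphism to show that only a controlled number of coordinates can have $v_i \neq 0$, while for $v_i = 0$ the function $g_i$ must be essentially supported on a set constrained by the defects. The constant $28 = 2 \cdot 14$ suggests that one first builds an intermediate homomorphism $h(x) = xv$ with $w_H(f(x) - xv) \leq 14c$ via such slope extraction, and then pays a further factor of $2$ to exchange $v$ for $f(1)$, using $w_H(v - f(1)) \leq 14c$ and $w_H(x(v - f(1))) = w_H(v - f(1))$ for $x \neq 0$ in characteristic zero. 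Carrying out the slope extraction rigorously — cleanly separating coordinates into ``sparse'' and ``essentially linear'' types — is where I expect the bulk of the technical work to lie; once done, the Main Theorem follows by the triangle inequality.
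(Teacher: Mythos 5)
Your proposal sets up the easy reductions (normalising to $g(x)=f(x)-x\,f(1)$, bounding $w_H(g(0))$, the identity $g(x_0)=g(x_0+y)-g(y)-e_{x_0,y}$ and the averaging inequality), but the crucial step --- actually bounding $|\mathrm{supp}(g(x_0))|$ --- is deferred to a ``slope extraction'' that you do not carry out and that, as described, does not work. The limit $v_i:=\lim_N g_i(N)/N$ need not exist: the quasihomomorphism hypothesis only says that for each pair $(x,y)$ \emph{at most $c$ coordinates} violate additivity; it puts no bound on the size of the violation in those coordinates, nor on how often a \emph{fixed} coordinate $i$ is among the violators. A single coordinate function $g_i$ can fail additivity on a positive fraction of pairs with arbitrarily large errors (compare the paper's example where one coordinate is arbitrary on all multiples of $5$), so it is not a quasimorphism in the classical sense and has no well-defined asymptotic slope. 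Since you explicitly say this is where the bulk of the work lies and leave it unproven, the proposal has a genuine gap rather than being an alternative complete proof.

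The missing idea, which the paper supplies, is a counting argument that works \emph{per coordinate on a finite window and then averages over coordinates}: if a function $[2a]\to\QQ$ has a small zero set and a small nonperiodicity set (obtained by reducing modulo $a$ and excluding the pairs whose sum leaves $[a]$), then its problem set inside $[a]\times[a]$ has size at least on the order of $a^2$ (Propositions~\ref{abeliangroup} and~\ref{abeliangroupprime}, Corollary~\ref{cor:nonzeroAndPeriodicGivesProblems}). Combined with the trichotomy of Proposition~\ref{conj:variation} and the double count $\sum_{x,y\in[a]} w_H(f(x+y)-f(x)-f(y))\le c\,a^2$ (and the analogous one-dimensional counts for $P_1$ and $P_a$), this caps the number of ``bad'' coordinates by $c\bigl(\tfrac1{F(p,q)}+\tfrac1q+\tfrac1p\bigr)$, and optimising $p,q$ plus the extra $2c$ for negative arguments gives $28c$. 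Incidentally, your guess $28=2\cdot 14$ via an intermediate homomorphism $x\mapsto xv$ is not how the constant arises; the paper compares $f$ directly with $x\mapsto x\,f(1)$ and the $28$ comes from minimising $2+\tfrac1q+\tfrac1p+\tfrac4{(1-q-2p)^2}\approx 27.68$.
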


\begin{remark}
The coefficient $28$ is probably not optimal. %
However, we certainly have that $C(c) \geq c$. Indeed, any map $f:\ZZ \to Q^n$ for which the only nonzero entries of $f(x)$ are among the first $c$, is automatically a $c$-quasimorphism.
\end{remark}

\begin{corollary} \label{cor:main}
    \Cref{maintheorem} also holds with $\QQ$ replaced by any
    torsion-free abelian group $Q$, with the same value of $C=C(c)$.
\end{corollary}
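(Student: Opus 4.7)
My strategy is to deduce the corollary from the Main Theorem by composing $f$, coordinate by coordinate, with a carefully chosen $\QQ$-linear functional on the divisible hull $V := Q \otimes_\ZZ \QQ$. Since $Q$ is torsion-free, the natural map $Q\to V$ will be injective, so $Q^n$ embeds into $V^n$ while preserving Hamming weights, and I can regard $f$ as a $c$-quasihomomorphism $\ZZ\to V^n$. It therefore suffices to prove $w_H(f(x)-xf(1))\leq 28c$ in $V^n$ for each fixed $x\in\ZZ$.

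I would then fix $x\in\ZZ$, set $s_i := f_i(x)-xf_i(1)\in V$ and $S := \{i\in[n] : s_i\neq 0\}$, and aim to bound $|S|$ by $28c$. For each $i\in S$ the subset $H_i := \{\phi\in V^* : \phi(s_i)=0\}$ is a proper subspace of the $\QQ$-dual $V^*$. The key ingredient will be the classical fact that a vector space over an infinite field is not a union of finitely many proper subspaces; applied to $V^*$ and the finitely many $H_i$, this produces a $\QQ$-linear functional $\phi:V\to\QQ$ with $\phi(s_i)\neq 0$ for every $i\in S$.

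Composing $\phi$ coordinate-wise with $f$ yields $\phi\circ f:\ZZ\to\QQ^n$. Since applying a linear functional can only turn nonzero entries into zero ones, $w_H$ does not increase under this composition, so $\phi\circ f$ is still a $c$-quasihomomorphism. The Main Theorem then applies and gives $w_H((\phi\circ f)(x)-x\cdot(\phi\circ f)(1))\leq 28c$. The $i$-th coordinate of this vector equals $\phi(s_i)$, which by construction is nonzero precisely when $i\in S$. Thus $|S|\leq 28c$, which is the desired bound $w_H(f(x)-xf(1))\leq 28c$.

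The only delicate point will be the construction of $\phi$; everything else is routine manipulation. The lemma on finite unions of proper subspaces is standard, and the reduction depends on $Q$ being torsion-free precisely to ensure that the map $Q\to Q\otimes_\ZZ\QQ$ is injective and hence Hamming-preserving.
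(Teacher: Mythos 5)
Your proposal is correct and follows essentially the same route as the paper: embed $Q$ into $V=\QQ\otimes_\ZZ Q$ using torsion-freeness, pick a $\QQ$-linear functional nonzero on the finitely many nonzero entries of $f(x)-x\cdot f(1)$ (the paper invokes this as "basic linear algebra", you justify it via the finite-union-of-proper-subspaces fact), note that composing coordinatewise cannot increase Hamming weights, and apply the Main Theorem. The only difference is cosmetic: the paper argues by contradiction at a single bad point $y$, while you argue directly for each fixed $x$.
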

\begin{proof}
    Suppose, for a contradiction, that we have a $c$-quasihomomorphism
    $f:\ZZ \to Q^n$ but $w_H(f(y)-y\cdot f(1)) > C$ for some $y \in
    \ZZ$. Since $Q$ is torsion-free, the natural map $\iota$ from $Q$
    into the $\QQ$-vector space $V:=\QQ \otimes_\ZZ Q$ is injective.
    Consequently, $g:=\iota^n \circ f$ is a $c$-quasihomomorphism
    $\ZZ \to V^n$ with $w_H(g(y)-y\cdot g(1))> C$. Now choose any
    $\QQ$-linear function $\xi:V \to \QQ$ that is nonzero on the
    nonzero entries of $g(y)-y \cdot g(1)$. Then $h:=\xi^n \circ
    g$ is a $c$-quasihomomorphism $\ZZ \to \QQ^n$ with $w_H(h(y)-y
    \cdot h(1))>C$, a contradiction to~\Cref{maintheorem}.
\end{proof}

Theorem \ref{maintheorem} shows that for a $c$-quasihomomorphism $f:
\ZZ \to \QQ^n$, the group homomorphism $\tilde{f}: \ZZ\to \QQ^n$
defined by $\tilde{f}(x) = x\cdot f(1)$ gives a $C$-approximation for
some constant $C\in \NN$ independent on $n$. However, $\tilde{f}$ need
not be the homomorphism closest to $f$, as the next example shows.

\begin{example} 
Let $c=1$ and $n\geq 3$. Define $f: \ZZ \to \QQ^n$ to be 
\[
    f(x) = \left( \left\lfloor\frac{2x+2}{5}\right\rfloor, \left\lfloor\frac{x+2}{5}\right\rfloor, \alpha_x, 0, \ldots, 0\right),
\]
where $\alpha_x$ is arbitrary if $5 \mid x$, and $\alpha_x = 0$ otherwise. This is a $1$-quasihomomorphism.  
	
Note that $w_H(f(x)-x\cdot f(1)) \leq 3$ where equality is sometimes
achieved. However, there also exist $2$-approximations of $f$. For instance, letting $v=(\frac{2}{5},\frac{1}{5},0,\ldots, 0)\in \QQ^n$, one verifies that 
\[
    w_H(f(x)-x\cdot v) \leq 2 \qquad \forall x \in \ZZ. %
\]
In fact, we can show that for every $1$-quasihomomorphism $f: \ZZ \to \QQ^n$ there exists a $2$-approximation, as claimed in \cite{KazhdanZiegler2018}. The proof will appear in future work. 
\end{example}

On the other hand, the following shows that the best possible
approximation of a given qusimorphism $f$ is at most twice as close as the homomorphism $x \mapsto x\cdot f(1)$.

\begin{remark}
Suppose that a map $f:\ZZ \to \QQ^n$ has a $C'$-approximation $h$. Then $h(x)= x\cdot v$ for some $v \in \QQ^n$, and 
\[
    w_H(f(x)-x\cdot v) \leq C' \qquad \forall x \in \NN.
\]
    Substituting $x=1$ yields $w_H(f(1)-v) \leq C'$. Thus
\[
    w_H(f(x)-x\cdot f(1)) \leq w_H(f(x)-x\cdot v) + w_H(x\cdot v -x\cdot
    f(1)) \leq 2C'. \qedhere
\]
\end{remark}

\begin{remark}
A result similar to Theorem \ref{maintheorem} is easily proven in
finite characteristic if we allow the constant $C$ to depend on the
characteristic. Let $K$ be a field of characteristic $p>0$, and let $f:
\ZZ \to K^n$ be a $c$-quasihomomorphism.  Then there exists a constant
$C=C(p,c)$ such that $w_H(f(x)-x \cdot f(1))\leq C$, for all $x\in \ZZ$. 

To see this, we observe that for all $u,v \in \ZZ$ with $u \geq 1$, we have $$
    w_H(f(uv) - uf(v)) \leq (u-1)c.
$$ 
This follows by repeatedly applying the inequality $w_H(f(uv) - f((u-1)v) - f(v)) \leq c$ if $u > 1$; the case $u = 1$ is trivial.

For $x = kp+r$ with $k \in \ZZ$ and $0 \leq r \leq p-1$, we
have 
$$
    w_H(f(x) - x f(1)) = w_H(f(kp+r) - r f(1));
$$ here we
have used that $p \cdot f(1)=0$. We rewrite the latter as
$$
    w_H(f(kp+r) - f(kp) - f(r) + f(kp) + f(r) - r f(1)).
$$
We
have $w_H(f(kp+r) - f(kp) - f(r)) \leq c$; 
$w_H(f(kp)) \leq (p-1)c$ using our observation with $u=p, v
= k$; and also $w_H(f(r) - r f(1)) \leq (p-2)c$ (in the case $r > 0$) using our main observation. In total, this gives $w_H(f(x) - x f(1)) \leq 2(p-1)c$, so we can take $C = 2(p-1)c$.
\end{remark}

The remainder of this paper is organized as follows. In
Section~\ref{sec:Almost} we prove an auxiliary result of independent
interest: maps from a finite abelian group into a torsion-free
group that are almost a homomorphism, are in fact almost zero. Then,
in Section~\ref{sec:Proof}, we apply this auxiliary result to the
component functions of a $c$-quasimorphism $\ZZ \to \QQ^n$ to prove the
Main Theorem.

\section{Almost homomorphisms are almost zero} \label{sec:Almost}

Let $A$ be a finite abelian group and let $H$ be a torsion-free abelian group. The only homomorphism $A \to H$ is the zero map. The following proposition says that maps that are, in a suitable sense, close to being homomorphisms, are in fact also close to the zero map.

\begin{prop}\label{abeliangroup}
Let $a$ be a positive integer, $A$ an abelian group of order
$a$, $H$ a torsion-free abelian group, $q \in [0,1]$, and $f:A \to H$
a map. Suppose that the {\em zero set}
$$
    Z(f):=\{b \in A \mid f(b)=0\} 
$$
has cardinality at most $qa$. Then the {\em problem set}
$$
    P(f):=\{(b,c) \in A\times A \mid f(b+c) \neq f(b)+f(c)\}
$$
has cardinality at least $\left(\frac{a-qa}{2}\right)\cdot \left(\frac{a-qa}{2}+1\right)$. 
\end{prop}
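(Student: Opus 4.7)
My plan is to reduce to $H = \QQ$ and then exploit the order structure of $\QQ$ to produce many bad pairs.

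First, I reduce to $H = \QQ$. Since $H$ is torsion-free, the canonical map $H \to V := H \otimes_\ZZ \QQ$ is injective, so $f(A)$ sits inside the $\QQ$-vector space $V$. The nonzero values $\{f(b) : b \in A \setminus Z(f)\}$ form a finite set in $V \setminus \{0\}$. Pick a $\QQ$-linear functional $\xi : V \to \QQ$ vanishing on none of them; such $\xi$ exists because the ``bad'' $\xi$'s form a finite union of proper hyperplanes in $V^*$. Set $\tilde f := \xi \circ f : A \to \QQ$. Then $Z(\tilde f) = Z(f)$ and $P(\tilde f) \subseteq P(f)$, so it suffices to prove the bound for $\tilde f$; hence I may assume $H = \QQ$.

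Now partition $N := A \setminus Z(f) = N_+ \sqcup N_-$ with $N_\pm := \{b : \pm f(b) > 0\}$, and let $M := a - qa$. The key observation is that if $(b, c) \in N_+^2$ is a \emph{good} pair then $f(b+c) = f(b) + f(c) > 0$, forcing $b + c \in N_+$; analogously for $N_-$. Hence every pair $(b, c) \in N_\pm^2$ with $b + c \notin N_\pm$ automatically lies in $P(f)$. Setting $B_\pm := |\{(b, c) \in N_\pm^2 : b + c \notin N_\pm\}|$, this yields the baseline $|P(f)| \geq B_+ + B_-$. A second source of bad pairs, relevant when $B_\pm$ are individually small, is \emph{magnitude overflow}: if $|f(b) + f(c)|$ exceeds $\max_{d \in A} |f(d)|$, then no element of $A$ has the required $f$-value, so $(b, c) \in P(f)$.

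The core of the proof is to upgrade these observations to the quadratic bound $|P(f)| \geq (M/2)(M/2+1)$. The extremal case $A = \ZZ/(2k+1)$ with $f$ the balanced integer representative (values $\{-k, \ldots, k\}$) has $|N_\pm| = k$ and $B_\pm = k(k+1)/2$, saturating $(M/2)(M/2+1) = k(k+1)$ exactly; this suggests the sign-partition bad pairs are the correct organising principle. In general I would sort each $N_\pm$ by decreasing $|f|$-value and argue greedily that the $i$-th element contributes $\sim i$ fresh bad pairs (combining the sign and magnitude-overflow observations), yielding a triangular count $\sim (M/2)(M/2+1)/2$ per sign; the symmetry of $P(f)$ under $(b, c) \mapsto (c, b)$ then accounts for the remaining factor of $2$.

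The main obstacle is the quantitative combinatorial bound: the greedy accounting must uniformly handle the interplay between the two sources of bad pairs, both when $A$ has rich cyclic structure and when $f(N_\pm)$ has few distinct values (so that $B_\pm$ alone underestimates). A clean alternative route is induction on $|A|$: reduce via the structure theorem to cyclic $A$, prove the bound there by explicit ordering as in the extremal example, and then reassemble the bound on general finite abelian groups by projecting onto cyclic components.
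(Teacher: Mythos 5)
Your reduction to $H=\QQ$ and your qualitative observations (a good pair in $N_+^2$ forces $b+c\in N_+$; a sum exceeding the maximum attained value is automatically bad) are exactly the right starting point, and in fact the same one the paper uses. But the proposal stops precisely where the proof has to happen: you acknowledge that ``the quantitative combinatorial bound'' is the main obstacle and then only gesture at a greedy count. The missing idea is to stratify $N_+$ by the \emph{distinct} values $\lambda_1>\cdots>\lambda_k>0$, with level sets $B_i$ of sizes $n_i$: for $c\in B_i$ and any $b\in N_+$, a good pair forces $f(b+c)=f(b)+f(c)>\lambda_i$, i.e.\ $b+c\in B_1\cup\cdots\cup B_{i-1}$, and since $b\mapsto b+c$ is injective (this is where the group structure enters) there are at most $n_1+\cdots+n_{i-1}$ good choices of $b$. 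Summing over layers gives at least $\sum_i n_i(n_i+\cdots+n_k)\geq n(n+1)/2$ bad pairs with second coordinate in $N_+$, where $n=|N_+|$, and this single count already subsumes both of your ``sources'' (sign overflow and magnitude overflow) uniformly -- no separate case analysis for few distinct values is needed.

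Two specific steps in your sketch would not survive being written out. First, the claim that ``the $i$-th element contributes $\sim i$ fresh bad pairs'' is exactly the statement that needs the injectivity-of-translation argument above; as stated it is unproven, and your plan to recover a factor $2$ from the symmetry $(b,c)\mapsto(c,b)$ is invalid unless you show the set of pairs you found is antisymmetric (it generally is not; and if you do the count per second coordinate correctly you already get the full $n(n+1)/2$ per sign, so no doubling is needed or available). Second, your target ``$(M/2)(M/2+1)/2$ per sign'' silently assumes $|N_+|=|N_-|=M/2$; for an unbalanced split you need the convexity step $n(n+1)/2+n'(n'+1)/2\geq\left(\frac{n+n'}{2}\right)^2+\frac{n+n'}{2}$ (Cauchy--Schwarz) together with $n+n'\geq a(1-q)$, which is absent from the proposal. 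The alternative route via the structure theorem and ``projecting onto cyclic components'' does not obviously work either: the hypothesis and conclusion do not decompose along a direct product, and the cyclic case is not easier than the general one -- the layer argument above needs nothing about $A$ beyond its group structure.
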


The contraposition of this statement says that if $P(f)$ is a small fraction of $a^2$, so that
$f$ can be thought of as an (additive) ``almost homomorphism'' $A \to H$,
then $q$ must be close to $1$ so that $f$ is essentially zero.

\begin{proof}
Since $H$ is torsion-free, it embeds into the $\QQ$-vector space $V:=\QQ \otimes_\ZZ H$. By basic linear algebra, there exists a $\QQ$-linear function $\xi:V \to \QQ$ such that $\xi(f(b)) \neq 0$ for all $b \not \in Z(f)$, so that $Z(\xi \circ f)=Z(f)$. Since $P(\xi \circ f) \subseteq P(f)$, it suffices to prove the proposition for $\xi \circ f$ instead of $f$. In other words, we may assume from the beginning that $H=\QQ$.

Set 
$$
    B:=\{b \in A \mid f(b)>0\}. 
$$
Let $\lambda_1>\lambda_2>\ldots>\lambda_k>0$ be the distinct
values in $f(B)$, and for each $i=1,\ldots,k$ set 
$$
    B_i:=\{b \in B \mid f(b)=\lambda_i\} \text{ and } n_i:=|B_i|;
$$
as well as $n:=n_1+\cdots+n_k=|B|$. 

Now for each $c \in B_1$ and each $b \in B$ we have
$$
    f(b)+f(c)=f(b)+\lambda_1>\lambda_1 
$$
so that the left-hand side is not in $f(B)$ and in particular not equal to $f(b+c)$. We have thus found $n_1 \cdot (n_1+\cdots+n_k)$ pairs $(b,c) \in P(f)$ with $c \in B_1$. 

Next, suppose $(b,c)$ is a pair with $c \in B_2$, $b \in B$, and $(b,c) \not \in P(f)$.
Then 
$$
    f(b+c)=f(b)+f(c)>f(c)=\lambda_2 
$$
and hence $b+c \in B_1$. But given $c$, there are at most $n_1$ values of $b$ with $b+c \in B_1$. (Note that here we have used that $A$ is a group.) Hence we have at least $n_2 \cdot
(n_2+\cdots+n_k)$ pairs $(b,c) \in P(f)$ with $c \in B_2$. 

Similarly, we find at least $n_i \cdot (n_i+\cdots+n_k)$ pairs $(b,c) \in P(f)$ with $c \in B_i$. In total, we have therefore found at least
\begin{equation}\label{eq:upper_triangular}
     \sum_{i=1}^k n_i \cdot (n_i+\cdots+n_k) \geq \frac{n(n+1)}{2}
\end{equation}
pairs in $P(f)$; see Figure \ref{jans_proof_picture}. 

Let $B':=\{b' \in A \mid f(b')<0\}$ and $n' := |B'|$.  Repeating
the same argument above with $B'$ and $n'$, we find at least
$n'(n'+1)/2$ further pairs in $P(f)$, disjoint from those
found above. Since $|Z(f)|\leq qa$, we have $n+n'\geq a(1-q)$.
Therefore 
\[
    |P(f)|\geq \frac{n(n+1)}{2}+\frac{n'(n'+1)}{2}=\frac{n^2+n'^2}{2}+\frac{n+n'}{2}\geq \left(\frac{n+n'}{2}\right)^2+\frac{n+n'}{2},
\]
where the second inequality is the Cauchy-Schwarz inequality 
\[ 
    \left(n^2+n'^2\right)\left(\frac{1}{2^2}+\frac{1}{2^2}\right)\geq
\left(\frac{n}{2} +\frac{n'}{2}\right)^2. 
\] 
Since $n+n'\geq a(1-q)$, we conclude that 
\begin{equation*}
|P(f)| \geq \left(\frac{a-qa}{2}\right)\cdot \left(\frac{a-qa}{2}+1\right). 
    \qedhere  
\end{equation*} 
\end{proof}

\begin{figure}[b]
	\centering
	\includegraphics[valign=t]{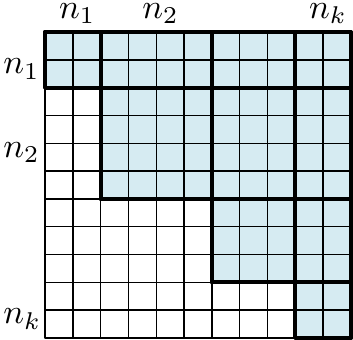}
	\includegraphics[valign=t]{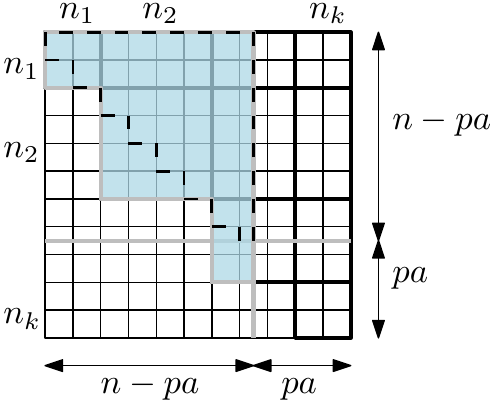}
	\caption{On the left, a graphical proof of the inequality 
	\eqref{eq:upper_triangular}: the left-hand side is the number
	of small squares in the shaded region, the right-hand side is
	the number of squares on or above the main diagonal.
	On the right, a proof of the inequality
	\eqref{eq:ineq2}: the left-hand side is the area of the shaded region, the right-hand side, the area enclosed by the dashed line.}
	
	\label{jans_proof_picture}
\end{figure}

\begin{remark}
The lower bound in Proposition \ref{abeliangroup} is sharp.  Let
$a=2k+1\in \ZZ$, consider $A := \ZZ/a\ZZ$ and define $f:A \to \ZZ$ as
$f(x):=$the representative of $x+a\ZZ$ in $\{-k,\ldots,0,\ldots,k\}$. We take $q=\frac{Z(f)}{a} = \frac{1}{2k+1}$. Then
$f(x+y)=f(x)+f(y)$ if and only if the right-hand side is still inside
the interval $\{-k,\ldots,k\}$, and a straightforward count shows that
this is the case for $3k^2+3k+1$ pairs $(x,y) \in A^2$. Hence $P(f)$
has size $k\cdot (k+1)$, which equals $\left(\frac{a-qa}{2}\right)
\cdot \left(\frac{a-qa}{2}+1\right)$. A similar construction for $a=2k$
yields a problem set of size $\frac{a^2}{4} = k^2$, which equals the ceiling of the lower bound $\frac{a^2}{4}-\frac{1}{4}$.
\end{remark}

Below, we will use the following
strengthening of Proposition \ref{abeliangroup}:

\begin{prop}\label{abeliangroupprime}
Let $a,A,H,q$ and $f$ be as in \Cref{abeliangroup}.
Furthermore, let $p \in [0,\frac{1-q}{2})$ and let $S \subseteq A$ be a subset of cardinality at most $pa$.
Then the set
\[
    P_S(f):=\{(b,c) \in A\times A \mid f(b+c) \neq f(b)+f(c) \text{ and } b+c \notin S\}.
\]
has cardinality at least $\left(\frac{a(1-q-2p)}{2}\right)\cdot \left(\frac{a(1-q-2p)}{2}+1\right)$. 
\end{prop}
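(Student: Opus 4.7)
The plan is to follow the structure of the proof of \Cref{abeliangroup} closely, while carefully tracking the extra loss caused by restricting attention to pairs with $b+c \notin S$. First, reduce to $H = \QQ$ via the same linear-functional argument. Stratify $B := \{b \in A : f(b) > 0\}$ as $B_1 \sqcup \cdots \sqcup B_k$ by the distinct positive values $\lambda_1 > \cdots > \lambda_k > 0$ of $f$, and write $n_i := |B_i|$, $m_i := n_i + \cdots + n_k$, $n := |B| = m_1$.

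For each $c \in B_i$, the elements $b \in B$ with $(b,c) \notin P_S(f)$ either lie outside $P(f)$ -- which by the argument of \Cref{abeliangroup} forces $b+c \in B_1 \cup \cdots \cup B_{i-1}$ -- or satisfy $b+c \in S$. A union bound gives at most $(n_1 + \cdots + n_{i-1}) + |S|$ bad values of $b$, hence at least $\max(m_i - |S|, 0)$ good ones. Summing over $c \in B_i$ and then over $i$ yields
\[
|P_S(f) \cap (B \times B)| \geq \sum_{i=1}^k n_i \max(m_i - |S|, 0).
\]
The crux of the proof will be the following ``truncated'' version of the key estimate in \Cref{abeliangroup}: for positive integers $n_1, \ldots, n_k$ with $n = \sum n_i$ and $m_i = \sum_{j \geq i} n_j$, and any integer $s$ with $0 \leq s \leq n$,
\[
\sum_{i=1}^k n_i \max(m_i - s, 0) \geq \frac{(n-s)(n-s+1)}{2}.
\]

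To prove this, I let $j^*$ be the largest index with $m_i > s$, and set $M := m_{j^*+1} \leq s$ and $r := s - M \geq 0$. Using the identity $\sum_{i \leq j} x_i x_j = \frac{1}{2}[(\sum x_i)^2 + \sum x_i^2]$, the left-hand side rewrites as $\frac{1}{2}[(n-s)^2 - r^2] + \frac{1}{2}\sum_{i \leq j^*} n_i^2$. The decisive observation is that $m_{j^*} > s \geq m_{j^*+1}$ forces $n_{j^*} = m_{j^*} - m_{j^*+1} \geq r + 1$; combined with $n_i \geq 1$ for $i < j^*$, this gives $\sum_{i \leq j^*} n_i^2 \geq (r+1)^2 + \bigl((n-M) - (r+1)\bigr) = (n-s) + r^2 + 2r$, and substituting yields the claimed bound (in fact with a spare additive $r$).

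Applying the same argument to $B' := \{b \in A : f(b) < 0\}$ with $n' := |B'|$, and writing $x := \max(n - |S|, 0)$ and $y := \max(n' - |S|, 0)$, I obtain
\[
|P_S(f)| \geq \frac{x(x+1)}{2} + \frac{y(y+1)}{2}.
\]
The hypotheses $|Z(f)| \leq qa$ and $|S| \leq pa$ give $n + n' \geq (1-q)a$, so $x + y \geq (n-|S|) + (n' - |S|) \geq a(1-q-2p) =: M^*$, which is positive by $p < (1-q)/2$. A final application of Cauchy--Schwarz ($x^2 + y^2 \geq (x+y)^2/2$) together with the monotonicity of $t \mapsto t(t+2)$ on $t \geq 0$ yields
\[
|P_S(f)| \geq \frac{(x+y)\bigl((x+y)+2\bigr)}{4} \geq \frac{M^*(M^*+2)}{4} = \frac{a(1-q-2p)}{2}\left(\frac{a(1-q-2p)}{2} + 1\right),
\]
as desired. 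The main obstacle is the truncated triangle-counting inequality; once it is in hand, the rest of the argument parallels the proof of \Cref{abeliangroup}.
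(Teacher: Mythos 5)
Your proof is correct and follows essentially the same route as the paper: the same stratification of $B$ and $B'$ by the values of $f$, the same per-element loss of at most $|S|\leq pa$, truncation of the sum at the last index where the count stays nonnegative, and the same Cauchy--Schwarz finish with $n,n'$ replaced by $n-|S|,n'-|S|$. The only difference is that you verify the truncated counting inequality algebraically (correctly, even with a spare additive $r$), whereas the paper justifies it pictorially via Figure~\ref{jans_proof_picture}.
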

\begin{proof} 
Keep the notation from the proof of \Cref{abeliangroup}.
Recall $n=|B|$ and $n' = |B'|$. Note that for a fixed $b$,
there can be at most $pa$ choices of $c$ with $b+c \in S$.
We then find at least $n_i\cdot(n_i+\cdots+n_k-pa)$ pairs
$(b,c) \in P_S(f)$ with $b \in B_i$. Letting $k'\leq k$ be
the largest index for which the second factor
$(n_{k'}+\cdots+n_k-pa)$ is nonnegative, as in the proof of \Cref{abeliangroup}, we find that $B$
contributes at least 
\begin{align}
    \sum_{i=1}^{k'}{n_i\cdot(n_i+\cdots+n_k-pa)} &=
    \sum_{i=1}^{k'}{n_i\cdot(n-n_1-\cdots-n_{i-1}-pa)}
    \notag \\
    &\geq (n-pa)(n-pa+1)/2  \label{eq:ineq2}
\end{align}
to $P_S(f)$; see Figure~\ref{jans_proof_picture}. Similarly, $B'$ contributes at least
$(n'-pa)(n'-pa+1)/2$, and these contributions
are disjoint. The desired inequality follows as in the
proof of \Cref{abeliangroup} but with $n,n'$
replaced by $n-pa,n'-pa$. \end{proof}

The key ingredient for the proof of \Cref{maintheorem} is the following
corollary of \Cref{abeliangroupprime}. Here, and in the rest of the paper,
we write $[a]$ for the set $\{1,2,\ldots,a\}$.

\begin{corollary} \label{cor:nonzeroAndPeriodicGivesProblems}
Let $p, q \in [0,1]$ such that $p < \frac{1-q}{2}$. Let $f:[2a] \to \QQ$ such that:
\begin{enumerate}
	\item $|Z(f)| \leq qa$, where $Z(f) \coloneqq \{x \in [a] \mid f(x) = 0\}$ is the zero set. %
	\item $|NP(f)| \leq pa$, where $NP(f) \coloneqq \{x \in [a] \mid f(x+a) \neq f(x)\}$ is the nonperiodicity set.
\end{enumerate}
Then 
\[
|P(f)| \geq \frac{(1-q-2p)^2}{4}a^2 + \frac{(1-q-2p)}{2}a,
\]
where 
\[
P(f) = \{(x,y) \in [a]\times [a] \mid f(x+y) \neq f(x)+f(y)\}.
\]
\end{corollary}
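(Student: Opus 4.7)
The plan is to reduce the statement to \Cref{abeliangroupprime} applied on the cyclic group $A := \ZZ/a\ZZ$, identified with $[a] = \{1,\ldots,a\}$ by taking the unique representative in that range (with $a$ representing the $0$ coset). The near-periodicity hypothesis $|NP(f)| \leq pa$ is exactly what is needed to ``fold'' $f$ from $[2a]$ to a function on $A$ up to a small exceptional set, which will play the role of the set $S$ in \Cref{abeliangroupprime}.

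Concretely, define $g:A \to \QQ$ by $g([x]) := f(x)$ for $x \in [a]$. Under the identification of $A$ with $[a]$ we immediately have $Z(g) = Z(f)$, so $|Z(g)| \leq qa$. Next, set
\[
    S := \{\,[r] \mid r \in NP(f)\,\} \subseteq A,
\]
so that $|S| \leq pa$ and the hypothesis $p < (1-q)/2$ puts us in the regime of \Cref{abeliangroupprime}.

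The heart of the argument is to show that $|P(f)| \geq |P_S(g)|$ via the bijection $[a]^2 \leftrightarrow A^2$. Suppose $([x],[y]) \in P_S(g)$ with $x,y \in [a]$. If $x+y \leq a$, then $[x+y] = x+y$ in our identification, so $g([x+y]) = f(x+y)$, and $g([x+y]) \neq g([x]) + g([y])$ is literally the statement $(x,y) \in P(f)$. If instead $x+y > a$, then $[x+y] = x+y-a$; the defining condition $[x+y]\notin S$ translates to $x+y-a \notin NP(f)$, i.e.\ $f(x+y-a) = f(x+y)$, so $g([x+y]) = f(x+y-a) = f(x+y)$ and again $(x,y) \in P(f)$. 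Thus $P_S(g)$ injects into $P(f)$.

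Applying \Cref{abeliangroupprime} to $g$ with the chosen $S$, $p$, $q$ gives
\[
    |P(f)| \;\geq\; |P_S(g)| \;\geq\; \frac{a(1-q-2p)}{2}\cdot\left(\frac{a(1-q-2p)}{2}+1\right) = \frac{(1-q-2p)^2}{4}a^2 + \frac{1-q-2p}{2}a,
\]
which is the desired inequality. The only real obstacle is the bookkeeping around the boundary $x+y = a$: one has to verify that defining $S$ as the image of $NP(f)$ (rather than, say, including $[a]$ itself or some larger set) is precisely what is needed so that pairs surviving in $P_S(g)$ are genuine witnesses of failure in $P(f)$. Once this is arranged, everything else is a direct appeal to the previous proposition.
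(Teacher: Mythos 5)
Your proposal is correct and follows essentially the same route as the paper: identify $[a]$ with $\ZZ/a\ZZ$, take $S$ to be (the image of) $NP(f)$, apply \Cref{abeliangroupprime}, and observe that every pair in $P_S$ is a genuine pair in $P(f)$ because $[x+y]\notin S$ forces $f(x+y)$ to agree with its folded value. Your explicit case split at $x+y\leq a$ versus $x+y>a$ just spells out the paper's one-line justification ``$b*c \notin S$ implies $\tilde f(b*c)=f(b+c)$.''
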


\begin{proof}
Let $\tilde{f}$ be the restriction of $f$ to the interval
$[a]$, and identify $\ZZ/ {a\ZZ}$ with $[a]$ with the group
operation $*$ defined by $x*y := x+y \ (\operatorname{mod} \ a)$. 

Let $S = NP(f)$, and apply \Cref{abeliangroupprime} to $\tilde{f}$. We find that  
\[
P_S(\tilde{f}) = \{(b,c) \in \ZZ/ {a\ZZ} \times \ZZ/ {a\ZZ} \mid \tilde{f}(b*c) \neq \tilde{f}(b)+\tilde{f}(c) \text{ and } b*c \notin S\}
\]
has cardinality at least $\frac{(1-q-2p)^2}{4}a^2 + \frac{(1-q-2p)}{2}a$. Since $b*c \notin S$ implies that $\tilde{f}(b*c) = f(b+c)$, this set is contained in the problem set $P(f)$.
\end{proof}

\section{Proof of the Main Theorem} \label{sec:Proof}

The main goal of this section is to prove Theorem \ref{maintheorem}. 
We start with some definitions. 

\begin{definition} Let $1<a$, and $f:[2a] \to \QQ$. We define the following \emph{problem sets of $f$}:
$$
    P(f):= \{(x,y) \in [a] \times [a] \mid f(x+y) \neq f(x) + f(y)\},
$$
and
$$
	P_1(f) := \{x \in [a] \mid f(x+1) \neq f(x) + f(1)\},
$$	
and
$$
	P_a(f) := \{x \in [a] \mid f(x+a) \neq f(x) + f(a)\}.
$$
Furthermore, we recall that the {\em zero set} of $f$ is
defined as 
$$ 
	Z(f):=\{x \in [a] \mid f(x) =0\}.
$$
\end{definition}

The following proposition says that $P_1(f),P_a(f),P(f)$
cannot be simultaneously small. 

\begin{prop} \label{conj:variation} Let $p,q \in (0,1)$ such
that $p<\frac{1-q}{2}$, $a\in \NN$ with $1<a$, and let $f:[2a] \to \QQ$ such that $f(a) \neq af(1)$. If 
$$
    |P_1(f)| \leq qa \quad \text{and} \quad  |P_a(f)| \leq pa
$$
then 
$$
	|P(f)|\geq F(p,q) \cdot a^2,
$$
where 
\begin{equation*}
     F(p,q) = \frac{(1-q-2p)^2}{4}. \qedhere
\end{equation*}
\end{prop}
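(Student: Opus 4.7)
The plan is to reduce this proposition directly to Corollary~\ref{cor:nonzeroAndPeriodicGivesProblems} via a single linear change of function. First I would set $\lambda := f(a)/a \in \QQ$ and introduce $g: [2a] \to \QQ$ by $g(x) := f(x) - x\lambda$. Direct calculation shows that this modification leaves all the relevant problem sets essentially intact: $P(g) = P(f)$, $P_1(g) = P_1(f)$, and the ``non-periodicity set'' $\{x \in [a] : g(x+a) \neq g(x)\}$ equals $P_a(f)$ (using $g(x+a)-g(x) = f(x+a)-f(x)-f(a)$). By the hypotheses, these sets have sizes at most $qa$ and $pa$ respectively, so two of the three conditions needed to apply the corollary to $g$ are immediate.

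Next I would verify the remaining hypothesis of the corollary, namely that the zero set $Z(g) := \{x \in [a] : g(x)=0\}$ has size at most $qa$. The two facts to exploit are $g(a) = 0$ (by construction) and $g(1) = f(1) - f(a)/a \neq 0$ (equivalent to the hypothesis $f(a) \neq a f(1)$). Writing $P_1(g) \cap [a] = \{x_1 < \cdots < x_k\}$, the equation $g(y+1) = g(y) + g(1)$ fails only at the $x_i$, so $[a]$ is partitioned into subintervals $[x_{i-1}+1, x_i]$ (with $x_0 := 0$, and an additional subinterval $[x_k+1, a]$ when $x_k < a$). On each subinterval $g$ is affine with nonzero slope $g(1)$, hence contributes at most one zero. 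The first subinterval $[1, x_1]$ is special: here $g(x) = xg(1)$, which is nonzero for every $x \geq 1$, so it contributes no zeros at all. A careful count in both cases $x_k < a$ and $x_k = a$ then yields $|Z(g)| \leq k = |P_1(g)| \leq qa$.

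With all the corollary's hypotheses verified for $g$ at parameters $p$ and $q$, applying Corollary~\ref{cor:nonzeroAndPeriodicGivesProblems} yields $|P(g)| \geq \frac{(1-q-2p)^2}{4} a^2 + \frac{1-q-2p}{2}a \geq F(p,q)\cdot a^2$, and since $P(f) = P(g)$ the proposition follows. The main obstacle I anticipate is the bound on $|Z(g)|$: the naive count (one zero per subinterval, of which there can be $k+1$) falls short by one, and recovering the sharp bound $|Z(g)| \leq qa$ requires precisely the observation that the first subinterval has the form $g(x) = xg(1)$ and is therefore zero-free thanks to $g(1) \neq 0$. Everything else is bookkeeping around the substitution $g(x) = f(x) - x\lambda$.
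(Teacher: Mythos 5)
Your proposal is correct and follows essentially the same route as the paper: you normalize by subtracting the linear function $x\mapsto x f(a)/a$ (the paper uses the rescaled $\tilde f(x)=af(x)-xf(a)$, which is the same up to a factor of $a$), bound $|Z|\leq|P_1|$ using that the function is an arithmetic progression with nonzero step $g(1)$ between consecutive $P_1$-elements, and then invoke \Cref{cor:nonzeroAndPeriodicGivesProblems}. The only cosmetic difference is in the zero-count: you partition $[a]$ into intervals delimited by $P_1(g)$ and note the first interval is zero-free, whereas the paper injects the zeros into $P_1$-elements lying between consecutive zeros; both yield $|Z|\leq|P_1|\leq qa$.
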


\begin{proof}
Without loss of generality we can assume $f(a)=0$ and hence
$f(1) \neq 0$. Indeed, suppose we have shown the statement
for every $\tilde{f}$ with $\tilde{f}(a)=0$. Then for any
$f:[2a] \to \QQ$ with $f(a) \neq af(1)$, we take
$\tilde{f}:[2a] \to \QQ$ to be $\tilde{f}(x) = af(x) - x f(a)$. Now we observe that $\tilde{f}(a) = 0 \neq a\tilde{f}(1)$, and that $P(f)=P(\tilde{f})$, $P_1(f)=P_1(\tilde{f})$, $P_a(f)=P_a(\tilde{f})$.

We write $Z(f)=\{x_1<\cdots<x_m\}$. Note that for $1\leq i <m$, one of the elements $x_i,x_i+1,\ldots,x_{i+1}-1$ needs to be in $P_1(f)$ since $f(x_{i+1}) \neq f(x_i) + (x_{i+1}-x_i)f(1)$. Likewise, at least one of the elements $1, 2, \ldots, x_1-1$ needs to be in $P_1(f)$. Thus we have 
$$
    |Z(f)| \leq |P_1(f)| \leq qa,
$$
and by assumption we have $|NP(f)| = |P_a(f)| \leq pa$. Now we can apply \Cref{cor:nonzeroAndPeriodicGivesProblems} to conclude.
\end{proof}

We now prove \Cref{maintheorem}.

\begin{proof}[Proof of the Main Theorem.]
Consider a $c$-quasihomomorphism $f=(f_1,\ldots,f_n):\ZZ \to \QQ^n$. Our goal is to show that for every $a \in \ZZ$ we have $w_H(f(a)-a\cdot f(1)) \leq C$ for some constant $C$ depending only on $c$. We start with the case $a >0$.

Write $M:=w_H(f(a)-af(1))$. Without loss
of generality, we have $f_1(a)\neq af(1)$, \ldots,
$f_M(a)\neq af(1)$. We will show that $M \leq C'$ for some
constant $C'$ depending on $c$ only. 
To this end, fix small parameters $p,q \in (0,1)$ (to be
optimized over later) and consider the 
restrictions $f_i:[2a] \to \QQ$ of the components of $f$.
By \Cref{conj:variation}, for every $i$, we have that either
\begin{enumerate}[label=(\roman*)]
	\item\label{it:P}  $|P_1(f_i)| > qa$, or
	\item\label{it:P1} $|P_a(f_i)| > pa$, or
	\item\label{it:Pa} $|P(f_i)| \geq F(p,q)a^2$.
\end{enumerate}
Let $m_0$ be the number of coordinates $i$ such that \ref{it:Pa} holds. We define $m_1$ and $m_2$ analogously, for \ref{it:P} and \ref{it:P1} respectively.
	
By counting the number of triples $(i,x,y) \in [M] \times [a] \times [a]$ such that $f_i(x+y)-f_i(x)-f_i(y) \neq 0$ in two ways, we see that
$$
	\sum_{x=1}^{a}\sum_{y=1}^{a}{w_H\left(f(x+y)-f(x)-f(y)\right)} = \sum_{i=1}^{M}{|P(f_i)|}.
$$
Because $f$ is a $c$-quasihomomorphism, the left hand side
is at most $a^2c$. On the other hand, the right hand side is
at least $m_0F(p,q)a^2$, so 
$$
    a^2c \geq
    \sum_{x=1}^{a}\sum_{y=1}^{a}{w_H\left(f(x+y)-f(x)-f(y)\right)}
    = \sum_{i=1}^{M}{|P(f_i)|}  \geq  m_0F(p,q)a^2.
$$
So we obtain $m_0 \leq \frac{c}{F(p,q)}$. Similarly we find 
$$
	ac \geq
	\sum_{x=1}^{a}{w_H\left(f(x+1)-f(x)-f(1)\right)} =
	\sum_{i=1}^{M}{|P_1(f_i)|}  > m_1qa, 
$$
so that $m_1 < \frac{c}{q}$. Finally, 
$$
	ac \geq \sum_{x=1}^{a}{w_H\left(f(x+a)-f(x)-f(a)\right)} = \sum_{i=1}^{M}{|P_a(f_i)|} > m_2pa.
$$	
So $m_2 < \frac{c}{p}$. But now $M = m_0 + m_1 + m_2 < c
(\frac{1}{F(p,q)}+\frac{1}{q}+\frac{1}{p})=:C'$. 

The case $a=0$ is easy: we have 
\[ w_H(f(0))=w_H(f(0)-f(0)-f(0)) \leq c.\] 

Finally, let us consider the case $a<0$. Then
\begin{align*}
    w_H(f(a)-a\cdot f(1)) \leq& w_H(f(a)+f(-a)-f(0))+w_H(f(0)) \\
    &+w_H(f(-a)-(-a)\cdot f(1)) \\
    \leq& 2c+C'=:C.
\end{align*}

This completes the proof.
\end{proof}

To get the explicit bound $28c$ from Theorem~\ref{maintheorem}, we 
minimize the function 
\[
2+\frac{1}{q}+\frac{1}{p}+\frac{1}{F(p,q)}=
2+\frac{1}{q}+\frac{1}{p}+\frac{4}{(1-q-2p)^2}.
\]
Note that this function is strictly convex for $(p,q) \in \RR_{>0}^2$,
so that it has at most one minimum in the positive orthant. We find
this by setting the partial derivatives equal to zero and solving for
$p,q$. The minimum is $\approx 27.6817$ and attained at $(p,q) \approx
(0.1167, 0.16500)$.

\bibliographystyle{alpha}
\bibliography{quasimorphisms}{}

\end{document}